\title{Enumeration of $(0,1)$-matrices avoiding \\some $2\times2$ matrices}
\author{Hyeong-Kwan Ju\thanks{This study was financially supported by Chonnam National University, 2008-2329.}\\
\small Department of Mathematics\\[-0.8ex]
\small Chonnam National University, \\[-0.8ex]
\small Kwangju 500-757, South Korea\\
\small \texttt{hkju@chonnam.ac.kr}\\
\and
Seunghyun Seo\thanks{This study was supported by 2008 Research Grant from Kangwon National University.}\\
\small Department of Mathematics Education\\[-0.8ex]
\small Kangwon National University\\[-0.8ex]
\small Chuncheon 200-701, South Korea\\
\small \texttt{shyunseo@kangwon.ac.kr}
}
\date{\today \\
\small Mathematics Subject Classifications: 05A15, 05A19, 05B20}
\newcommand{\by}{\times}
\newcommand{\kn}{k \times n}
\newcommand{\mS}{\mathcal S}
\newcommand{\MTad}{\begin{pmatrix}1 & 0 \\ 0 & 1 \\ \end{pmatrix}}
\newcommand{\MTab}{\begin{pmatrix}1 & 1 \\ 0 & 0 \\ \end{pmatrix}}
\newcommand{\sMTab}{\left(\begin{smallmatrix}1 & 1 \\ 0 & 0 \\ \end{smallmatrix}\right)}
\newcommand{\sMTcd}{\left(\begin{smallmatrix}0 & 0 \\ 1 & 1 \\ \end{smallmatrix}\right)}
\newcommand{\MTac}{\begin{pmatrix}1 & 0 \\ 1 & 0 \\ \end{pmatrix}}
\newcommand{\MTOd}{\begin{pmatrix}1 & 1 \\ 1 & 0 \\ \end{pmatrix}}
\newcommand{\MTIa}{\begin{pmatrix}1 & 0 \\ 0 & 0 \\ \end{pmatrix}}
\newcommand{\MTI}{\begin{pmatrix}1 & 1 \\ 1 & 1 \\ \end{pmatrix}}
\newcommand{\sMTI}{\left(\begin{smallmatrix}1 & 1 \\ 1 & 1 \\ \end{smallmatrix}\right)}
\newcommand{\MTO}{\begin{pmatrix}0 & 0 \\ 0 & 0 \\ \end{pmatrix}}
\newtheorem{thm}{Theorem}[section]
\newtheorem{dfn}[thm]{Definition}
\newtheorem{prop}[thm]{Proposition}
\theoremstyle{remark}
\newtheorem{rmk}{Remark}
\newtheorem{eg}{Example}
\begin{document}
\maketitle
\begin{abstract}
We enumerate the number of $(0,1)$-matrices avoiding $2\times 2$ submatrices satisfying certain conditions.
We also provide corresponding exponential generating functions.
\end{abstract}

\section{Introduction}

Let $M(k,n)$ be the set of $k \by n$ matrices with entries $0$ and $1$. It is obvious that the number of elements in the
set $M(k,n)$ is $2^{kn}$. It would be interesting to consider the number of elements in $M(k,n)$ with certain conditions.
For example, how many matrices of $M(k,n)$ do not have $2\by 2$ submatrices of the forms $\sMTab$ and $\sMTcd$? In this article we will
give answers to the previous question and other questions.

Consider $M(2,2)$, the set of all possible $2\by 2$ submatrices.
For two elements $P$ and $Q$ in $M(2,2)$, we denote $P\sim Q$ if $Q$ can be obtained from $P$ by row or column exchanges. It is obvious that $\sim$ is an equivalence relation on $M(2,2)$. With this equivalence relation, $M(2,2)$ is partitioned with seven equivalent classes having the following seven representatives. 
\begin{gather*}
I = \MTad,\quad
\Gamma = \MTOd ,\quad
C = \MTIa,\quad \\
T = \MTab,\quad
L = \MTac,\quad
J = \MTI,\quad
O = \MTO.
\end{gather*}
Here $C$, $T$, and $L$ mean ``corner", ``top", and ``left", respectively. Let $\mS$ be the set of
these representatives, i.e.,
$$\mS:=\{I, \,\Gamma,\, C,\, T,\, L,\, J, \,O\}.$$

Given an element $S$ in $\mS$, a matrix $A$ is an element of the set $M(S)$ if and only if for {\em every} permutation $\pi_1$ of the rows and $\pi_2$ of the columns, the resulting matrix does not have the submatrix $S$.
Equivalently, $A \in M(S)$ means that $A$ has no submatrices in the equivalent class $[S]$.
For a subset $\alpha$ of $\mS$, $M(\alpha)$ is defined by the set
$\cap_{S \in \alpha} M(S)$.
Note that the definition of $M(S)$ (also $M(\alpha)$) is different from that in \cite{KRW, Sp2}.
If $A$ belongs to $M(\alpha)$, then we say that $A$ {\em avoids} $\alpha$.
We let $\phi(k,n;\alpha)$ be the number of $k\by n$ $(0,1)$-matrices
in $M(\alpha)$.

Our goal is to express $\phi(k,n;\alpha)$ in terms of $k$ and $n$ explicitly for each subset $\alpha$ of the set $\mS$. For $|\alpha|=1$,
We can easily notice that $\phi(k,n;\Gamma )=\phi(k,n;C )$ and  $\phi(k,n;J )=\phi(k,n;O )$ by swapping $0$ and $1$. We also notice that $\phi(k,n;T )=\phi(n,k;L )$ by transposing the matrices.
The number $\phi(k,n;I)$ is well known (see \cite{Br, Ka, KKL}) and $(0,1)$-matrices
avoiding type $I$ are called $(0,1)$-lonesum matrices (we will define and discuss this in~\ref{sub:ls}).
In fact, lonesum matrices are the primary motivation of this article and its corresponding work.
The study of $M(J)$ (equivalently $M(O)$) appeared in \cite{jrKim, KRW, Sp2}, but finding a closed form of $\phi(k,n;J)=\phi(k,n;O)$ is still open.
The notion of ``$\Gamma$-free matrix" was introduced by Spinrad \cite{Sp1}. He dealt with a totally balanced matrix which has a permutation of the rows and coluums that are $\Gamma$-free. We remark that the set of totally balanced matrices is different from $M(\Gamma)$. 


In this paper we calculate $\phi(k,n;\alpha)$, where $\alpha$'s are $\{\Gamma\}$ (equivalently $\{C\}$) and $\{T\}$ (equivalently $\{L\}$). We also enumerate $M(\alpha)$ where $\alpha$'s are $\{\Gamma ,C\}$, $\{ T,L\}$, and $\{ J,O\}$. For the other subsets of $\mS$, we discuss them briefly in the last section. Note that some of our result (subsection~\ref{subJO}) is an independent derivation of some of
the results in \cite[section 3]{KMV} by Kitaev et al.; for other relevant papers see~\cite{Ki, KR}.

\section{Preliminaries}
\subsection{Definitions and Notations} \label{sub:dn}
A matrix $P$ is called $(0,1)$-matrix if all the entries of $P$ are $0$ or $1$. From now on we will consider $(0,1)$-matrices only, so we will omit ``$(0,1)$" if it causes no confusion.  Let $M(k,n)$ be the set of $\kn$-matrices. Clearly, if $k, n \geq 1$, $M(k,n)$ has $2^{kn}$ elements. For convention we assume that $M(0,0)=\{\emptyset \}$ and $M(k,0)=M(0,n)=\emptyset$ for positive integers $k$ and $n$.

Given a matrix $P$, a submatrix of $P$ is formed by selecting certain rows and columns from $P$. For example, if $P=\left(\begin{smallmatrix} a&b&c&d \\ e&f&g&h \\ i&j&k&l \end{smallmatrix}\right) $, then $P(2,3;2,4)=\left(\begin{smallmatrix} f&h \\ j&l \end{smallmatrix}\right) $.

Given two matrices $P$ and $Q$, we say $P$ contains $Q$, whenever $Q$ is equal to a submatrix of $P$. Otherwise say $P$ avoids $Q$.
For example, $\left(\begin{smallmatrix} 1&0&1 \\ 0&0&1 \\ 1&0&0 \end{smallmatrix}\right)$ contains $\left(\begin{smallmatrix} 0&1 \\ 1&0 \end{smallmatrix}\right)$
but avoids $\left(\begin{smallmatrix} 1&0 \\ 0&1 \end{smallmatrix}\right)$.
For a matrix $P$ and a set $\alpha$ of matrices, we say that $P$ avoids the type set $\alpha$ if $P$ avoids all the matrices in $\alpha$. If it causes no confusion we will simply say that $P$ avoids $\alpha$.

Given a set $\alpha$ of matrices, let $\phi(k,n;\alpha)$ be the number of $\kn$ matrices avoiding $\alpha$. From the definition of $M(k,n)$, for any set $\alpha$, we have $\phi(0,0;\alpha)=1$ and $\phi(k,0;\alpha)=\phi(0,n;\alpha)=0$ for positive integers $k$ and $n$. Let $\Phi(x,y;\alpha)$ be the bivariate exponential generating function for $\phi(k,n;\alpha)$, i.e.,
$$
\Phi(x,y;\alpha):= \sum_{n \ge 0}\sum_{k \ge 0} \phi(k,n;\alpha)\,\frac{x^k}{k!}\frac{y^n}{n!}= 1+ \sum_{n\ge 1}\sum_{k \ge 1} \phi(k,n;\alpha)\,\frac{x^k}{k!}\frac{y^n}{n!}\,.
$$
Let $\Phi(z;\alpha)$ be the exponential generating function for $\phi(n,n;\alpha)$, i.e.,
$$
\Phi(z;\alpha):= \sum_{n \ge 0} \phi(n,n;\alpha)\,\frac{z^n}{n!}\,.
$$
Given $f, g \in {\mathbb C}[[x,y]]$, we denote $f \stackrel{2}= g$ if the coefficients of $x^k y^n$ in $f$ and $g$ are the same, for each $k,n \ge 2$.

\subsection{$I$-avoiding matrices (Lonesum matrices)} \label{sub:ls}
This is related to the lonesum matrices. A lonesum matrix is a $(0,1)$-matrix determined uniquely by its column-sum and row-sum vectors.
For example, $\left(\begin{smallmatrix} 1&0&1 \\ 0&0&1 \\ 1&0&1 \end{smallmatrix}\right) $
is a lonesum matrix since it is a unique matrix determined by the column-sum vector $(2,0,3)$ and the row-sum vector $(2,1,2)^{\rm t}$.
However
$\left(\begin{smallmatrix} 1&0&1 \\ 0&0&1 \\ 1&0&0 \end{smallmatrix}\right) $
is not, since $\left(\begin{smallmatrix} 1&0&1 \\ 1&0&0 \\ 0&0&1 \end{smallmatrix}\right) $
has the same column-sum vector $(2,0,2)$ and row-sum vector $(2,1,1)^{\rm t}$.


\begin{thm}[Brewbaker~\cite{Br}] \label{Br}
A matrix is a lonesum matrix if and only if it avoids $I$.
\end{thm}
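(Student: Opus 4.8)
The plan is to prove both implications through the reformulation that avoiding $I$ is the same as saying the rows are nested under inclusion. Recall from the definition that $A \in M(I)$ means $A$ contains no $2\times 2$ submatrix lying in the class $[I] = \{\sMTad, \sMTbc\}$. Writing $S_i \subseteq \{1,\dots,n\}$ for the set of columns in which row $i$ of $A$ carries a $1$, I would first observe that $A$ \emph{fails} to avoid $I$ precisely when there are two rows $i, i'$ whose supports $S_i, S_{i'}$ are incomparable under inclusion: a column in $S_i \setminus S_{i'}$ together with a column in $S_{i'} \setminus S_i$ produces a copy of $\sMTad$ or $\sMTbc$, and conversely any such copy exhibits incomparable supports. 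Hence $A \in M(I)$ if and only if $S_1,\dots,S_k$ form a chain under inclusion.

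For the easy direction I would argue the contrapositive: if $A$ fails to avoid $I$, say the submatrix on rows $i<i'$ and columns $j<j'$ equals $\sMTad$ (the case $\sMTbc$ being symmetric), then replacing these four entries by the pattern $\sMTbc$ leaves every row sum and every column sum unchanged but yields a matrix $B \neq A$. Thus $A$ and $B$ share their row- and column-sum vectors while being distinct, so $A$ is not determined by its sum vectors, i.e., not lonesum.

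For the converse, suppose $A \in M(I)$ and let $r_i$ denote the $i$-th row sum. The key step is the observation that whenever $A[i,j]=1$ and $A[i',j]=0$ one must have $r_i > r_{i'}$: by the chain property either $S_i \subseteq S_{i'}$ or $S_{i'} \subseteq S_i$, and the former is impossible because $j \in S_i \setminus S_{i'}$, forcing $S_{i'} \subsetneq S_i$ and hence $r_{i'} < r_i$. Consequently, in each column $j$ every row carrying a $1$ has strictly larger row sum than every row carrying a $0$, so the $c_j$ ones of column $j$ occupy exactly the rows with the $c_j$ largest row sums. This shows each entry $A[i,j]$ is determined by the rank of $r_i$ among the sorted row sums together with the value $c_j$, that is, by the row- and column-sum vectors alone. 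Therefore $A$ is the unique matrix with its given sum vectors, so it is lonesum.

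I expect the only delicate point to be the treatment of repeated row sums in the final step: one must check that a block of rows with equal row sum is never split by the threshold defining column $j$. This follows from the chain property, since rows of equal row sum have supports of equal size inside a chain and are therefore identical, so each column is constant on such a block. An alternative, less self-contained route would invoke Ryser's interchange theorem (that two matrices with equal sum vectors are connected by swaps of the form $\sMTad \leftrightarrow \sMTbc$), from which both directions follow at once; I would mention this but prefer the direct argument above to keep the discussion elementary.
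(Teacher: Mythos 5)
The paper offers no proof of this theorem --- it is imported from Brewbaker with a citation --- so there is nothing internal to compare against; I am judging your argument on its own. Your reduction of ``avoids $I$'' to ``the row supports form a chain under inclusion'' is correct, your handling of ties is correct, and the forward direction (swap an occurrence of $\sMTad$ for $\sMTbc$ or vice versa to get a distinct matrix with the same margins) is complete.

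The gap is in the very last step of the converse. What you actually establish is that an $I$-avoiding matrix $A$ can be reconstructed from its margins by an explicit rule (``column $j$ has its ones in the rows with the $c_j$ largest row sums''). That proves the margin map is \emph{injective on $M(I)$}: at most one $I$-avoiding matrix has given sum vectors. But lonesum means that \emph{no matrix whatsoever} other than $A$ --- including ones that contain the pattern $I$ --- shares $A$'s margins, and your reconstruction rule says nothing about such a competitor $B$. The sentence ``therefore $A$ is the unique matrix with its given sum vectors'' does not follow from what precedes it. To close the gap, either carry out the double count: sort rows so $r_1 \ge \cdots \ge r_k$, note your rule gives $A[i][j]=1$ iff $i \le c_j$, so the number of ones of $A$ in the top $m$ rows is $\sum_j \min(c_j,m) = \sum_{i \le m} r_i$; for any $B$ with the same margins, column $j$ contributes at most $\min(c_j,m)$ ones to the top $m$ rows, and equality of the totals forces equality columnwise for every $m$, whence $B[i][j]=1$ iff $i \le c_j$, i.e.\ $B=A$. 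Alternatively, invoke the Ryser interchange theorem you mention in your closing aside --- but then that theorem is carrying the entire weight of this direction and cannot be relegated to an optional remark.
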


Theorem~\ref{Br} implies that $\phi(k,n; I)$ is equal to the number of $\kn$ lonesum matrices.

\begin{dfn}
Bernoulli number $B_n$ is defined as following:
$$
\sum_{n\geq 0} B_n \frac{x^n}{n!} = \frac{x\,e^x}{e^x -1}.
$$
\end{dfn}
Note that $B_n$ can be written explicitly as
$$
B_n = \sum_{m=0}^{n} (-1)^{m+n} \,\frac{m! \,S(n,m)}{m+1},
$$
where $S(n,m)$ is the Stirling number of the second kind. The poly-Bernoulli number, introduced first by Kaneko~\cite{Ka}, is defined as
$$
\sum_{n\geq0} B_n^{(k)} \frac{x^n}{n!} = \frac{{\rm Li}_k(1-e^{-x})}{1-e^{-x}},
$$
where the polylogarithm ${\rm Li}_k(x)$ is the function ${\rm Li}_k(x):= \sum_{m\geq 1} \frac{x^m}{m^k}$.
Bernoulli numbers are nothing but poly-Bernoulli numbers with $k=1$.
Sanchez-Peregrino~\cite{SP} proved that $B_n^{(-k)}$ has the following simple expression:
$$
B_n^{(-k)} = \sum_{m=0}^{\min(k,n)} (m!)^2 \,S(n+1,m+1)\, S(k+1, m+1)\,.
$$

Brewbaker~\cite{Br} and Kim et. al.~\cite{KKL} proved that the number of $\kn$ lonesum matrices is the poly-Bernoulli number $B_n^{(-k)}$, which yields the following result.
\begin{prop}[Brewbaker~\cite{Br}; Kim, Krotov, Lee~\cite{KKL}]\label{Br-Kim}
The number of $k\by n$ matrices avoiding $I$ is equal to $B_n^{(-k)}$, i.e.,
\begin{equation} \label{eq:phi-A1}
\phi(k,n;I)= \sum_{m=0}^{\min(k,n)} (m!)^2 \,S(n+1,m+1)\, S(k+1, m+1)\,.
\end{equation}
\end{prop}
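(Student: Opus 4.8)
The goal is to establish the formula
\[
\phi(k,n;I) = \sum_{m=0}^{\min(k,n)} (m!)^2\, S(n+1,m+1)\, S(k+1,m+1),
\]
which by Theorem~\ref{Br} counts $k\by n$ lonesum matrices, and which the authors identify with the poly-Bernoulli number $B_n^{(-k)}$ via the closed form of Sanchez-Peregrino. The cleanest approach I would take is to prove the combinatorial identity directly by constructing a bijection between $k\by n$ lonesum matrices and a suitable collection of pairs of set partitions (or equivalently pairs of surjections), since the factor $(m!)^2\,S(n+1,m+1)\,S(k+1,m+1)$ has an evident product structure: it counts a choice of $m+1$ blocks on the ``row side'' and $m+1$ blocks on the ``column side'', together with $m!$ ways of matching the two up (twice).

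First I would set up the structural characterization of lonesum matrices. By Theorem~\ref{Br}, a matrix is lonesum precisely when it avoids $I=\sMTad$ (and hence its equivalence class, which includes $\sMTbc$). I would show that avoiding $[I]$ forces a strong monotonicity: after sorting rows by row-sum and columns by column-sum, the $1$'s in each row form an initial (or appropriately nested) segment, so the matrix is a \emph{staircase} determined entirely by its row- and column-sum data. This is exactly the content of ``determined by its row and column sums,'' and it is the key step that reduces counting matrices to counting their sum-vector patterns.

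Next I would stratify the lonesum matrices by an integer $m$, the number of \emph{distinct} nonzero row-sum values (equivalently the number of distinct column ``breakpoints''), and count how many staircase matrices realize each value of $m$. The plan is to show that fixing $m$ amounts to: partitioning the $n$ columns into $m+1$ groups according to which rows they serve (an $S(n+1,m+1)$ factor, with the extra block absorbing an all-zero column type), symmetrically partitioning the $k$ rows into $m+1$ groups ($S(k+1,m+1)$), and then choosing an order-compatible bijection between the two group systems that produces a genuine staircase ($(m!)$ ways), with a second $m!$ coming from the independent freedom on the transposed side. Summing over $m$ from $0$ to $\min(k,n)$ then yields the right-hand side, since no staircase can have more than $\min(k,n)$ distinct nonzero thresholds.

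The main obstacle will be pinning down the exact bijective bookkeeping so that the two Stirling factors, the two factorials, and the ranges all emerge with no overcounting and no off-by-one errors—in particular, correctly handling the zero row-sums and zero column-sums that account for the shift from $n,k$ to $n+1,k+1$ in the Stirling arguments. An alternative that sidesteps the bijection would be to take the poly-Bernoulli identity of Sanchez-Peregrino as given (it is quoted in the excerpt) and instead prove only that $\phi(k,n;I)=B_n^{(-k)}$ by matching exponential generating functions: one computes $\Phi(x,y;I)$ directly from the staircase structure and checks it equals $\sum_{k,n} B_n^{(-k)} x^k y^n/(k!\,n!)$. Given that the closed form for $B_n^{(-k)}$ is already stated, I would lean on it and present the argument as: (i) lonesum $\iff$ staircase, (ii) staircases are counted by $B_n^{(-k)}$ through a surjection bijection, (iii) invoke the quoted formula to conclude~\eqref{eq:phi-A1}.
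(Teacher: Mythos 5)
The first thing to note is that the paper does not prove this proposition at all: it is imported wholesale from the literature (Theorem~\ref{Br} for the equivalence ``lonesum $\Leftrightarrow$ avoids $I$'', Brewbaker and Kim--Krotov--Lee for the count of lonesum matrices by $B_n^{(-k)}$, and Sanchez-Peregrino for the closed form of $B_n^{(-k)}$). So your plan to actually construct the bijection goes beyond what the paper does here; what it reconstructs is essentially Brewbaker's argument, and it closely parallels the proof the paper \emph{does} give for the $\Gamma$-avoiding case (Theorem~\ref{thm:A2}), where the same device of augmenting a phantom row $k+1$ and column $n+1$ produces the shift to $S(k+1,m+1)$ and $S(n+1,m+1)$ and absorbs the all-zero rows and columns. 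Your stratification is the right one, and your fallback route (prove only lonesum $\Leftrightarrow$ staircase and then cite the quoted closed form for $B_n^{(-k)}$) is exactly what the paper implicitly does. One point of bookkeeping to tighten: the $(m!)^2$ does not arise as ``$m!$ ways of matching the two sides, twice,'' nor from any freedom ``on the transposed side.'' Since $M(I)$ excludes the whole class $[I]$, i.e.\ both $\sMTad$ and $\sMTbc$, the distinct nonzero row supports form a strict chain $S_1\subsetneq\cdots\subsetneq S_m$; consequently the $m$ nonzero row-blocks must be \emph{linearly ordered} (one $m!$) and the $m$ nonzero column-blocks must be linearly ordered (the other $m!$), after which the matching between levels is forced and the matrix is determined. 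This is precisely where the lonesum count $(m!)^2\,S(n+1,m+1)\,S(k+1,m+1)$ differs from the $\Gamma$-count $m!\,S(n+1,m+1)\,S(k+1,m+1)$ of Theorem~\ref{thm:A2}, where only an unordered matching $\sigma$ between blocks is needed. With that correction your outline closes up into a complete proof.
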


The generating function $\Phi(x,y;I)$, given by Kaneko~\cite{Ka}, is
\begin{equation} \label{eq:Phi-A1}
 \Phi(x,y;I)=e^{x+y}\sum_{m \geq 0}\left[ (e^x -1)(e^y-1)\right]^m = \frac{e^{x+y}}{e^{x}+e^{y}-e^{x+y}}.
\end{equation}
Also, $\Phi(z;I)$ can be easily obtained as follows:
\begin{eqnarray}
\Phi(z;I)=\sum_{n\geq 0}\,\phi(n,n;I)\,\frac{z^n}{n!} 
&=&\sum_{n\geq 0}\sum_{m \ge 0}(-1)^{m+n} m!S(n,m) (m+1)^n\,\frac{z^n}{n!} \notag\\
&=&\sum_{m\geq 0}(-1)^m m!\sum_{n \ge 0}S(n,m)\, \frac{(-(m+1)z)^n}{n!}\notag\\
&=& \sum_{m \ge 0} (1-e^{-(m+1)z})^m\,. \label{eq:Phiz-A1}
\end{eqnarray}

\section{Main Results}
\subsection{$\Gamma$-avoiding matrices (or $C$)}
By row exchange and column exchange we can change the original matrix into a block matrix as in Figure~\ref{fig:A2}.
Here $\bf [0]$ (resp. $\bf [1]$) stands for a $0$-block (resp.$1$-block) and
$\bf [0^*]$ stands for a $0$-block or an empty block.
Diagonal blocks are $\bf [1]$'s except for the last block $\bf [0^*]$, and the off-diagonal blocks are $\bf [0]$'s.

\begin{figure}
\begin{center}
\includegraphics[scale=.5]{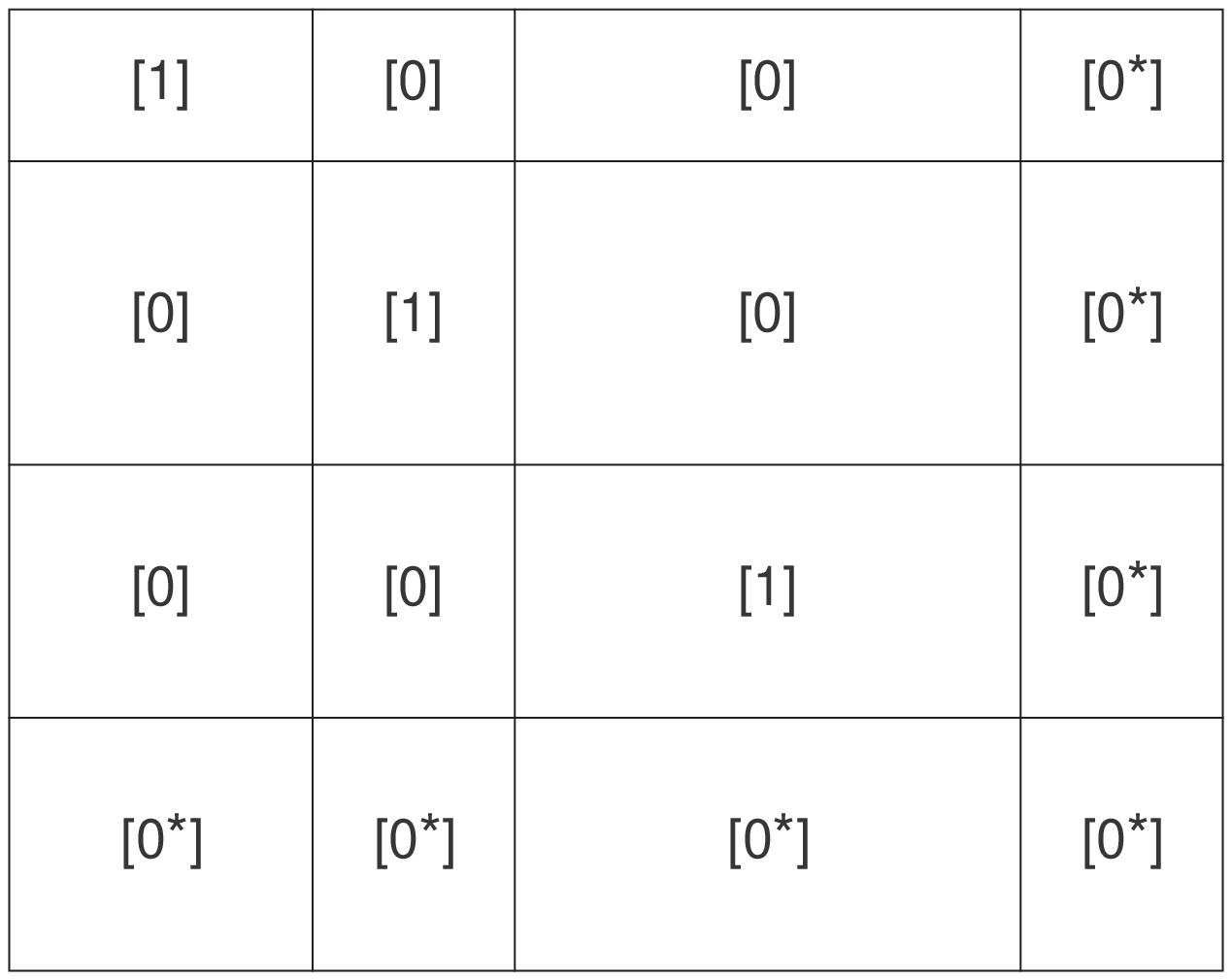}
\caption{A matrix avoiding $\Gamma$ can be changed into a block diagonal matrix.}\label{fig:A2}
\end{center}
\end{figure}

\begin{thm}\label{thm:A2} The number of $\kn$ matrices avoiding $\Gamma$ is given by
\begin{equation}\label{eq:phi-A2}
\phi(k,n;\Gamma)= \sum_{m=0}^{\min(k,n)} m! \,S(n+1,m+1)\, S(k+1, m+1).
\end{equation}
\end{thm}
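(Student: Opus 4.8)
The plan is to first convert $\Gamma$-avoidance into a clean structural condition, and then reduce to a pure counting problem that matches the right-hand side of~\eqref{eq:phi-A2} term by term. The starting point is that the class $[\Gamma]$ consists exactly of the four $2\by 2$ matrices having three $1$'s and one $0$, namely $\sMTOa,\sMTOb,\sMTOc,\sMTOd$. Thus $A\in M(\Gamma)$ if and only if no choice of two rows and two columns of $A$ yields three $1$'s and a single $0$. I would analyze this two columns at a time: writing $S_j\subseteq[k]$ for the support of column $j$ (the set of rows in which it is $1$), a pair of rows produces a forbidden pattern precisely when one of the rows lies in $S_{j_1}\cap S_{j_2}$ (entry pattern $11$) while the other lies in the symmetric difference $S_{j_1}\bigtriangleup S_{j_2}$ (entry pattern $10$ or $01$). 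Hence avoidance of $\Gamma$ is equivalent to the dichotomy that for every pair of columns $j_1,j_2$ either $S_{j_1}=S_{j_2}$ or $S_{j_1}\cap S_{j_2}=\emptyset$.

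From this dichotomy I would extract the block structure depicted in Figure~\ref{fig:A2}. Since the relation ``having equal support'' is an equivalence relation on columns and any two distinct nonempty supports are forced to be disjoint, the distinct nonempty column supports form a family $R_1,\dots,R_m$ of pairwise disjoint subsets of $[k]$. Grouping the columns by their support partitions the nonzero columns into blocks $C_1,\dots,C_m$, where $C_g$ is the set of columns with support $R_g$, and the remaining columns are all-zero. A short check shows that each row of $R_g$ has $1$'s exactly in the columns of $C_g$ and $0$ elsewhere, so after reordering rows and columns the matrix becomes block diagonal with all-$1$ blocks $R_g\by C_g$ on the diagonal, all-$0$ off-diagonal blocks, and trailing zero rows and zero columns --- precisely the normal form of the figure.

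With the structure in hand, the enumeration reduces to counting the combinatorial data: the unordered families $\{R_g\}$ and $\{C_g\}$ together with the pairing that tells which row-block sits opposite which column-block. For a fixed number $m$ of blocks, specifying $\{R_1,\dots,R_m\}$ together with the (possibly empty) set of zero-rows is the same as partitioning $[k]$ into $m$ nonempty blocks and one distinguished block absorbing the zero-rows; adjoining a marker element $k+1$ to that distinguished block shows this is counted by $S(k+1,m+1)$. Likewise the column data is counted by $S(n+1,m+1)$. Finally, the matrix is determined by which row-block is matched with which column-block, and since both families are unordered there are exactly $m!$ such matchings. As these three choices are independent and reconstruct each $\Gamma$-avoiding matrix exactly once, the number of matrices with $m$ blocks is $m!\,S(k+1,m+1)\,S(n+1,m+1)$, and summing over $m$ (the terms with $m>\min(k,n)$ vanishing) yields~\eqref{eq:phi-A2}.

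I expect the structural reduction, rather than the count, to be where care is needed: one must verify that the purely pairwise ``equal-or-disjoint'' condition on column supports genuinely assembles into the single global block-diagonal normal form of Figure~\ref{fig:A2}, and one must correctly fold the zero rows and zero columns into the interpretation of $S(k+1,m+1)$ (respectively $S(n+1,m+1)$) so that empty ``zero blocks'' are permitted while the $m$ genuine blocks are forced to be nonempty. The contrast with the lonesum count in~\eqref{eq:phi-A1} is instructive here: the extra factor $m!$ there (giving $(m!)^2$) reflects an additional internal labeling that is absent in the $\Gamma$-avoiding case, where each diagonal block is simply an all-$1$ rectangle.
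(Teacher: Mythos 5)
Your proof is correct and follows essentially the same route as the paper's: both encode a $\Gamma$-avoiding matrix as a pair of set partitions of $[k+1]$ and $[n+1]$ (your ``marker element'' is the paper's augmented zero row and column in $\widetilde M$) together with an $m!$-counted matching of the non-distinguished blocks. The only difference is that you justify the block-diagonal normal form explicitly via the equal-or-disjoint dichotomy on column supports, a step the paper delegates to Figure~\ref{fig:A2}.
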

\begin{proof}
Let $\mu=\{C_1, C_2, \ldots, C_{m+1}\}$ be a set partition of $[n+1]$ into $m+1$ blocks. Here the block $C_l$'s are ordered by the largest element of each block. Thus  $n+1$ is contained in $C_{m+1}$. Likewise,
let $\nu=\{D_1, D_2, \ldots, D_{m+1}\}$ be a set partition of $[k+1]$ into $m+1$ blocks. Choose $\sigma \in S_{m+1}$ with $\sigma(m+1)=m+1$, where $S_{m+1}$ is the set of all permutations of length $m+1$.
Given $(\mu,\nu,\sigma)$ we define a $\kn$ matrix $M=(a_{i,j})$ by
$$
a_{i,j}:=
\begin{cases} 1,& (i,j)\in C_l \times D_{\sigma(l)}~\text{for some $l\in [m]$} \\ 0,&\text{otherwise}
\end{cases} \,.
$$
It is obvious that the matrix $M$ avoids the type $\Gamma$.

Conversely, let $M$ be a $\kn$ matrix avoiding type $\Gamma$. Set $(k+1)\times(n+1)$ matrix ${\widetilde M}$ by augmenting zeros to the last row and column of $M$. By row exchange and column exchange we can change ${\widetilde M}$ into a block diagonal matrix $B$, where each diagonal is $1$-block except for the last diagonal. By tracing the position of columns (resp. rows) in ${\widetilde M}$, $B$ gives a set partition of $[n+1]$ (resp. $[k+1]$). Let $\{C_1, C_2, \ldots, C_{m+1}\}$ (resp. $\{D_1, D_2, \ldots, D_{m+1}\}$) be the set partition of $[n+1]$ (resp. $[k+1]$). Note that
the block $C_i$'s and $D_i$'s are ordered by the largest element of each block. Let $\sigma$ be a permutation on $[m]$ defined by $\sigma(i)=j$ if $C_i$ and $D_j$ form a $1$-block in $B$.

The number of set partitions $\pi$ of $[n+1]$ is $S(n+1,m+1)$, and the number of set partitions $\pi'$ of $[k+1]$ is $S(n+1,k+1)$. The cardinality of the set of $\sigma$'s is the cardinality of $S_{m}$, i.e., $m!$.
Since the number of blocks $m+1$ runs through $1$ to $\min(k,n)+1$, the sum of $S(k+1,m+1)\,S(n+1,m+1)
\,m!$ gives the required formula.
\end{proof}

\begin{eg}
Let $\mu=4/135/26/7$ be a set partition of $[7]$ and $\nu=25/6/378/149$ of $[9]$ into $4$ blocks. Let $\sigma=3124$ be a permutation
in $S_4$ such that $\sigma(4)=4$.
From $(\mu,\nu,\sigma)$ we can construct the $6\times8$ matrix $M$ which avoids type $\Gamma$ as in Figure~\ref{fig:A2eg}.
\begin{figure}
\begin{center}
\includegraphics[scale=.79]{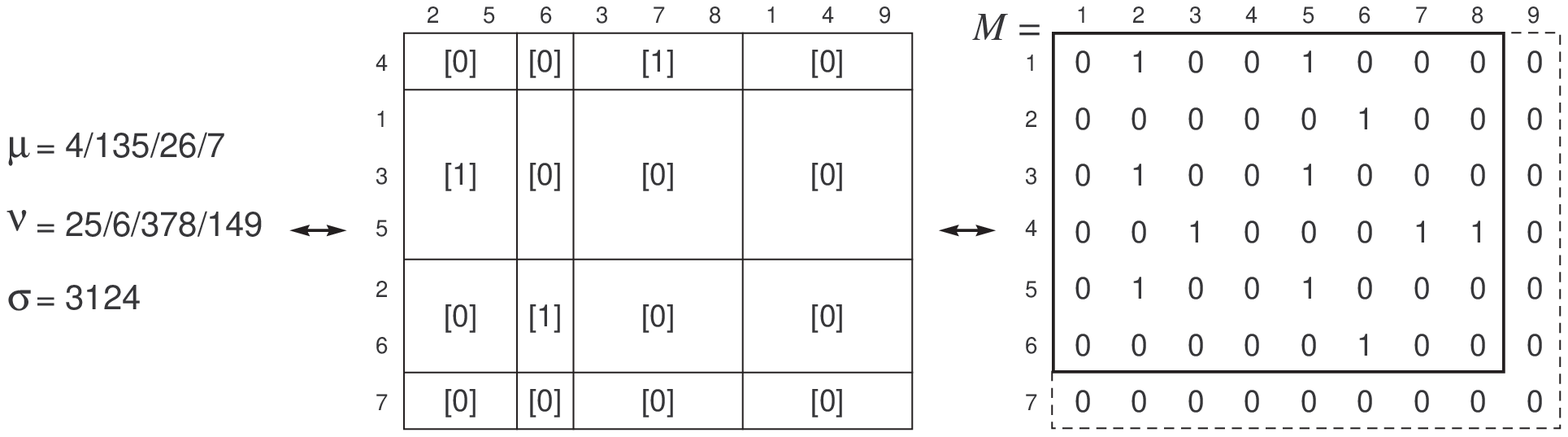}
\caption{A matrix avoiding $\Gamma$ corresponds to two set partitions with a permutation.}\label{fig:A2eg}
\end{center}
\end{figure}
\end{eg}

To find the generating function for $\phi(k,n;\Gamma)$ the following formula (see~\cite{GKP}) is helpful.
\begin{equation} \label{eq:GKP}
\sum_{n \geq 0} S(n+1,m+1) \,\frac{x^n}{n!} = e^x\,\frac{(e^x - 1)^m}{m!}.
\end{equation}

From Theorem~\ref{thm:A2} and \eqref{eq:GKP}, we can express $\Phi(x,y;\Gamma)$ as follows:
\begin{eqnarray}
\Phi(x,y;\Gamma)
&=&\sum_{n,k\geq 0}\,\sum_{m\ge 0} m! \,S(n+1,m+1)\, S(k+1, m+1)\,\frac{x^k}{k!}\frac{y^n}{n!}\notag\\
&=&\sum_{m\ge 0}\frac{1}{m!}\,\sum_{k\geq 0} \,m!\,S(k+1,m+1) \frac{x^k}{k!} \,\sum_{n\ge 0} m!\,S(n+1, m+1)\,\frac{y^n}{n!}\notag\\
&=&\sum_{m\geq 0} \,\frac{1}{m!}\,e^x (e^x -1)^m  \,\,e^y(e^y-1)^m \notag\\
&=& \exp[(e^x-1)(e^y-1)+x+y]\,. \label{eq:Phi-A2}
\end{eqnarray}
\begin{rmk}
It seems to be difficult to find a simple expression of $\Phi(z;\Gamma)$.
The sequence $\phi(n,n;\Gamma)$ is not listed in the OEIS~\cite{OEIS}. The first few terms of  $\phi(n,n;\Gamma)$ ($0 \le n \le 9$) are as follows:
$$
1, 2, 12, 128, 2100, 48032, 1444212, 54763088, 2540607060, 140893490432, \ldots
$$
\end{rmk}

\subsection{$\{\Gamma, C\}$-avoiding matrices}
Given the equivalence relation $\sim$ on $M(2,2)$, which is defined in Section~1, if we define the new equivalent relation $P\sim ' Q$ by $P\sim Q$ or $P\sim\sMTI - Q$, then $[\Gamma] \cup [C]$ becomes a single equivalent class. Clearly $\phi(k,n;\{\Gamma,C \})$ is the number of $k\times n$ $(0,1)$-matrix which does not have a submatrix in $[\Gamma] \cup [C]$. From now on we simply write $\phi(k,n;\Gamma,C)$, instead of $\phi(k,n;\{\Gamma,C \})$.
The reduced form of a matrix $M$ avoiding~$\{\Gamma , C\}$ is very simple as in Figure~\ref{fig:A23}.
\begin{figure}
\begin{center}
\includegraphics[scale=.7]{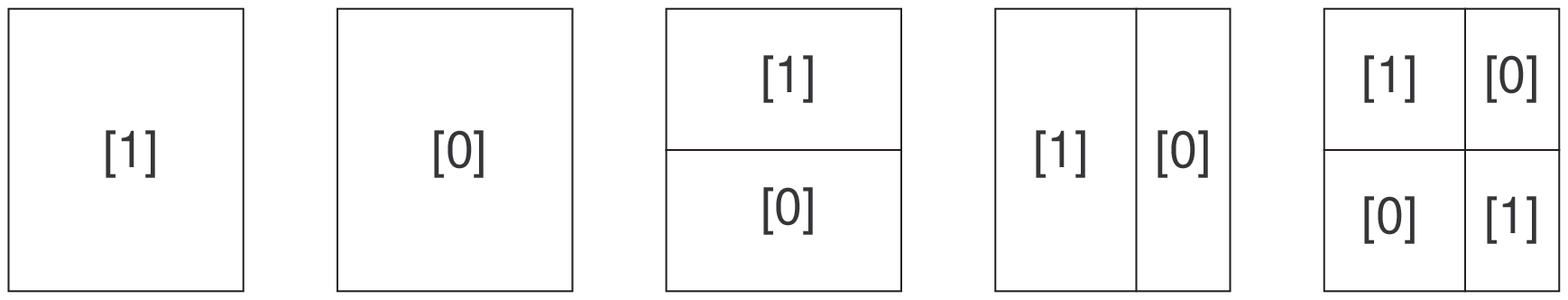}
\caption{Possible reduced forms of matrices avoiding $\{\Gamma , C\}$}\label{fig:A23}
\end{center}
\end{figure}
In this case if the first row and the first column of $M$ are determined then the rest of the entries of $M$ are determined uniquely.
Hence the number $\phi(k,n;\Gamma , C)$ of such matrices is
\begin{equation} \label{eq:phi-A23}
\phi(k,n;\Gamma , C)=2^{k+n-1}\qquad (k,n \ge 1),
\end{equation}
and its exponential generating function is
\begin{equation}\label{eq:Phi-A23}
\Phi(x,y;\Gamma , C)
= 1+\frac12\,(e^{2x}-1)(e^{2y}-1)\,.
\end{equation}
Clearly, $\phi(n,n;\Gamma , C)=2^{2n-1}$ for $n \geq 1$. Thus its exponential generating function is
\begin{equation}\label{eq:Phiz-A23}
\Phi(z;\Gamma , C)
= \frac12\,(e^{4z}+1)\,.
\end{equation}

\subsection{$T$-avoiding matrices (or $L$)}
Given a $(0,1)$-matrix, $1$-column (resp. $0$-column) is a column in which all entries consist of $1$'s (resp. $0$'s). We denote a $1$-column (resp. $0$-column) by $\bf 1$ (resp. $\bf 0$). A mixed column is a column which is neither $\bf 0$ nor $\bf 1$.
For $k=0$, we have $\phi(0,n;T)=\delta_{0,n}$. In case $k \ge 1$, i.e., there being at least one row, we can enumerate as follows:
\begin{itemize}
\item case 1: there are no mixed columns.
 Then each column should be $\bf 0$ or $\bf 1$. The number of such $k\by n$ matrices is $2^n$.
\item case 2: there is one mixed column.
 In this case each column should be $\bf 0$ or $\bf 1$ except for one mixed column.
 The number of $k\by n$ matrices of this case is $2^{n-1}\,n\,(2^k -2)$.
\item case 3: there are two mixed columns.
 As in case 2, each column should be $\bf 0$ or $\bf 1$ except for two mixed columns, say, $v_1$ and $v_2$.
 The number of $k\by n$ matrices of this case is the sum of the following three subcases:
\subitem - $v_1 + v_2= {\bf 1}$\,: ~$2^{n-2}\binom{n}{2}\,2!\,S(k,2)$
\subitem - $v_1 + v_2$ has an entry $0$\,: ~$2^{n-2}\binom{n}{2}\,3!\,S(k,3)$
\subitem - $v_1 + v_2$ has an entry $2$\,: ~$2^{n-2}\binom{n}{2}\,3!\,S(k,3)$
\item case 4: there are $m$ ($m \geq 3$) mixed columns $v_1,\ldots,v_m$.
The number of $k\by n$ matrices of this case is the sum of the following four subcases:
\subitem - $v_1+\cdots+v_m={\bf 1}$\,: ~$2^{n-m}\binom{n}{m}\,m!\,S(k,m)$
\subitem - $v_1+\cdots+v_m = (m-1){\bf 1}$\,: ~$2^{n-m}\binom{n}{m}\,m!\,S(k,m)$
\subitem - $v_1+\cdots+v_m$ has an entry $0$\,: ~$2^{n-m}\binom{n}{m}\,(m+1)!\,S(k,m+1)$
\subitem - $v_1+\cdots+v_m$ has an entry $m$\,: ~$2^{n-m}\binom{n}{m}\,(m+1)!\,S(k,m+1)$
%
\end{itemize}
Adding up all numbers in the previous cases yields the following theorem.
\begin{thm} \label{thm:phi-A4}
For $k,\,n \geq 1$ the number of $\kn$ matrices avoiding $T$ is given by
\begin{equation} \label{eq:phi-A4}
\phi(k,n;T)=2\sum_{l \ge 1}\,\binom{n}{l-1}\,l^k + (n^2 -n -4)\,2^{n-2} - n(n+3)\,2^{n+k-3}.
\end{equation}
\end{thm}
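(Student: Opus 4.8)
The plan is to recast $T$-avoidance as a compatibility condition on columns and then sum over the number of ``mixed'' columns, exactly along the case split preceding the statement. First I would observe that a $\kn$ matrix contains a member of $[T]=\{\sMTab,\sMTcd\}$ iff some pair of its columns, restricted to some pair of rows, realizes $\sMTab$ or $\sMTcd$. Identifying a column with its support $S\subseteq[k]$, two columns $S,S'$ create such a pattern iff they share a $1$-row \emph{and} a $0$-row; hence they are \emph{compatible} iff $S\cap S'=\emptyset$ or $S\cup S'=[k]$. The all-$\bf 0$ and all-$\bf 1$ columns (supports $\emptyset$ and $[k]$) are compatible with everything, while two \emph{equal} mixed columns always conflict (their common $1$-rows and $0$-rows give a $\sMTab$). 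Thus a $T$-avoiding matrix is precisely a placement of $\bf 0$/$\bf 1$ columns (each of the $n-m$ non-mixed positions chosen freely, contributing $2^{\,n-m}$) together with an ordered tuple of $m$ \emph{distinct, pairwise compatible} mixed columns in $\binom{n}{m}$ chosen positions. Writing $N(k,m)$ for the number of such ordered $m$-tuples, the problem becomes $\phi(k,n;T)=\sum_{m\ge 0}\binom{n}{m}2^{\,n-m}N(k,m)$.

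The combinatorial heart is the structural dichotomy that underlies case~4, which I would isolate as a lemma: for $m\ge 3$ pairwise compatible distinct mixed columns, either all pairs are disjoint or all pairs are covering. I would prove it first for a triple, by a short case check: if $S_1,S_2$ are disjoint and $S_3$ is compatible with both, then the three cross-possibilities ($S_1\cap S_3=\emptyset$ with $S_2\cup S_3=[k]$, etc.) each force one of the sets to be empty or full, contradicting mixedness, so all three pairs are disjoint. A propagation argument (one disjoint pair forces, through triples, every pair disjoint) upgrades this to the whole family; complementation $S\mapsto[k]\setminus S$, which preserves $T$-avoidance since it swaps $\sMTab\leftrightarrow\sMTcd$, handles the covering alternative. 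Refining each alternative by whether the union is $[k]$ (resp.\ the intersection is empty) gives the four subcases, counted via ordered set partitions: the ``diagonal'' cases ($\Sigma=\bf 1$ and $\Sigma=(m-1)\bf1$) each give $m!\,S(k,m)$, while a $0$-entry (resp.\ an $m$-entry) adds one extra leftover block, giving $(m+1)!\,S(k,m+1)$. I would then record $N(k,m)=2\,m!\,S(k,m)+2\,(m+1)!\,S(k,m+1)$ for $m\ge 3$, and check $m=0,1,2$ by hand; note that at $m=2$ the complementary configuration is simultaneously disjoint and covering, which merges two subcases and is why case~3 has three pieces, not four.

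With the counts fixed, I would prove the closed form by summation. Treating the above expression as the ``generic'' value of $N(k,m)$ for all $m$, the discrepancies $\mathrm{(generic)}-N(k,m)$ at $m=0,1,2$ are $1$, $2^{k}$, and $2^{k}-2$ respectively (using $S(k,2)=2^{k-1}-1$). The main identity is the collapse $\sum_{m\ge0}\binom{n}{m}2^{\,n-m}m!\,S(k,m)=\sum_{j\ge1}\binom{n}{j}j^{k}$, which I would obtain by expanding $m!\,S(k,m)=\sum_{j}(-1)^{m-j}\binom{m}{j}j^{k}$, swapping sums, and applying $\binom{n}{m}\binom{m}{j}=\binom{n}{j}\binom{n-j}{m-j}$ together with $(1-\tfrac12)^{n-j}=2^{-(n-j)}$. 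The identical manipulation on the shifted sum yields $\sum_{m\ge0}\binom{n}{m}2^{\,n-m}(m+1)!\,S(k,m+1)=\sum_{j}\binom{n}{j-1}j^{k}-\sum_{j}\binom{n}{j}j^{k}$, so the two generic pieces add to exactly $2\sum_{l\ge1}\binom{n}{l-1}l^{k}$, the leading term of the claim. Subtracting the three correction terms $2^{n}+n\,2^{\,n+k-1}+\binom{n}{2}2^{\,n+k-2}-\binom{n}{2}2^{\,n-1}$ and regrouping by powers of $2^{k}$ then produces the constants $(n^{2}-n-4)2^{\,n-2}$ and $-n(n+3)2^{\,n+k-3}$.

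I expect the dichotomy lemma to be the genuine obstacle: once the four geometric types of mixed-column families are pinned down, the counting and the final summation are mechanical. The secondary hazard is bookkeeping at small $m$, where the generic $N(k,m)$ must be corrected; obtaining the right corrections $1,\,2^{k},\,2^{k}-2$ (rather than being off by a factor of two in the evaluation of $S(k,2)$) is precisely what makes the constant term and the coefficient of $2^{\,n+k-3}$ come out as stated, and I would sanity-check the whole computation against small values such as $\phi(2,2;T)=14$.
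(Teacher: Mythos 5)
Your proposal is correct and follows essentially the same route as the paper: classify columns as $\mathbf 0$, $\mathbf 1$, or mixed, split into cases $m=0,1,2$ and $m\ge 3$ mixed columns with the same four (resp.\ three) subcases counted by $m!\,S(k,m)$ and $(m+1)!\,S(k,m+1)$, and collapse the sum via the identity $\sum_m \binom{n}{m}m!\,S(k,m)\,2^{n-m}=\sum_l\binom{n}{l}l^k$. The only difference is one of rigor and bookkeeping: you actually prove the disjoint-versus-covering dichotomy for $m\ge 3$ compatible mixed columns (which the paper asserts without argument) and organize the algebra as ``generic sum minus corrections at $m=0,1,2$'' rather than the paper's direct regrouping, but the resulting computation is identical and your correction terms $1$, $2^k$, $2^k-2$ check out.
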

\begin{proof}
\begin{eqnarray*}
\phi(k,n;T)
&=&2^n + 2^{n-1}\binom{n}{1}\,(2^{k}-2)+2^{n-2}\binom{n}{2}\,\left(\, 2!\,S(k,2)+3!\,2\,S(k,3)\,\right) \notag \\
& &+ \,\sum_{m=3}^{n}\, 2^{n-m+1}\binom{n}{m}\left(\, m!\,S(k,m)+(m+1)!\,S(k,m+1)\,\right) \notag\\
&=& 2\sum_{m=0}^{n}\,2^{n-m}\binom{n}{m}\,m! S(k+1,m+1)
+\, (n^2 -n -4)\,2^{n-2} - n(n+3)\,2^{n+k-3}\notag\\
&=& 2\sum_{l \ge 1}\,\binom{n}{l-1}\,l^k + (n^2 -n -4)\,2^{n-2} - n(n+3)\,2^{n+k-3}. 
\end{eqnarray*}
\end{proof}
Note that in the proof of Theorem~\ref{thm:phi-A4} we use the identity
$$
\sum_{m \ge 0} \binom{n}{m}\, m!\, S(k,m) \,2^{n-m} = \sum_{l\ge 0} \binom{n}{l} \,l^k\,,
$$
where both sides count the number of functions $f$ from $[k]$ to $[n]$ such that each element of~$[n]\setminus f([k])$ has two colors.

The generating function $\Phi(x,y;T)$ is given by
\begin{eqnarray}
\Phi(x,y;T)&\!=\!&1+ \sum_{n\geq 1}\sum_{k\geq 1} \,2\sum_{l \ge 1}\,\binom{n}{l-1}\,l^k\,\frac{x^k}{k!}\frac{y^n}{n!}\notag\\
&\!\!&+ \sum_{n\geq 1}\sum_{k\geq 1} \,(n^2 -n -4)\,2^{n-2}\,\frac{x^k}{k!}\frac{y^n}{n!}- \sum_{n\geq 1}\sum_{k\geq 1} \,n(n+3)\,2^{n+k-3}\,\frac{x^k}{k!}\frac{y^n}{n!}\notag\\
&\!=\!&1+ \left(2e^x (e^{y(e^x +1)}-1)-2e^{2y}+2 \right) \notag\\
&\!\!&+ (e^x-1)\left((y^2-1)e^{2y}+1\right) - \frac12 y(y+2)e^{2y}(e^{2x} -1)  \notag\\
&\!=\!& 2e^{y(e^x\!+1)+x}\!-\!\frac{y^2\!+2y}{2}e^{2x+2y}\!
+\!(y^2\!-\!1)e^{x+2y}\!-e^x\!-\frac{y^2\!-\!2y\!+\!2}{2}e^{2y}\!+\!2\,. \label{eq:Phi-A4}
\end{eqnarray}
Note that if we use the symbol ``$\stackrel{2}=$" introduced in Section~\ref{sub:dn}, then
$$\Phi(x,y;T)\stackrel{2}= 2e^{y(e^x\!+1)+x}\!-\!\frac{y^2\!+2y}{2}e^{2x+2y}\!
+\!(y^2\!-\!1)e^{x+2y}\,.$$

For the $n\by n$ matrices we have
$$
\phi(n,n;T) =  2\sum_{l \ge 1}\,\binom{n}{l-1}\,l^n + (n^2 -n -4)\,2^{n-2} - n(n+3)\,2^{2n-3}.
$$
Thus the generating function $\Phi(n,n; T)$ is given by
\begin{eqnarray}
\sum_{n\geq 0}\,\phi(n,n;T)\,\frac{z^n}{n!}
&=&2\sum_{n\geq 0}\,\sum_{l \ge 1}\,\binom{n+1}{l}\,l^{n+1}\,\frac{z^n}{(n+1)!} \notag\\
& &~+ \sum_{n\geq 0}\,\frac{n^2 -n -4}{4}\,\frac{(2z)^n}{n!}
- \frac{n(n+3)}{8}\,\frac{(4z)^n}{n!} \notag\\
&=& \frac2z \sum_{l \ge 1} \frac{(lz)^{l}}{l!} \sum_{n\geq l-1} \frac{(lz)^{n-l+1}}{(n-l+1)!} +(z^2 -1)e^{2z}-2z(z+1)e^{4z}\notag\\
&=& \frac2z \sum_{l \ge 1} \frac{l^{l}}{l!} (ze^z)^l +(z^2 -1)e^{2z}-2z(z+1)e^{4z}\notag\\
&=& \frac2z \,\left( ze^z\,W'(-ze^{z})\right) +(z^2 -1)e^{2z}-2z(z+1)e^{4z}\notag\\
&=& \frac{-2\,W(-ze^{z})}{z+z\,W(-ze^z)}+(z^2 -1)e^{2z}-2z(z+1)e^{4z}, \label{eq:Phiz-A4}
\end{eqnarray}
where
$$W(x):=\sum_{n \ge 1} (-n)^{n-1} \frac{x^n}{n!}$$
is the Lambert $W$-function which is the inverse function of $f(W)=We^W$.
See~\cite{CGHJK} for extensive study about the Lambert $W$-function.

\begin{rmk}
The sequence $\phi(n,n;T)$ is not listed in the OEIS \cite{OEIS}. The first few terms of~$\phi(n,n;T)$ ($0 \le n \le 9$) are as follows:
$$
1, 2, 14, 200, 3536, 67472, 1423168, 34048352, 927156224, 28490354432, \ldots
$$
\end{rmk}

\subsection{$\{T,L\}$-avoiding matrices}
Given the equivalence relation $\sim$ on $M(2,2)$, which is defined in Section~1, if we define the new equivalent relation $P\sim '' Q$ by $P\sim Q$ or $P\sim Q^{\rm t}$, then $[T] \cup [L]$ becomes a single equivalent class. Clearly $\phi(k,n;\{T,L \})$ is the number of $k\times n$ $(0,1)$-matrix which does not have any submatrix in $[T] \cup [L]$.
By the symmetry of $\{T, L\}$, we have
$$\phi(k,n;T , L)=\phi(n,k;T , L).$$
So it is enough to consider the case
$k \geq n$.
For $k \leq 2$ or $n\leq 1$, we have
\begin{gather*}
\phi(0,n;T , L)=\delta_{0,n}, \quad \phi(1,n;T , L)=2^n ,\\
\phi(k,0;T , L)=\delta_{k,0}, \quad \phi(k,1;T , L)=2^k , \\
\phi(2,2;T , L)=12.
\end{gather*}

Given a $(0,1)$-vector $v$ with a length of at least $3$, $v$ is called $1$-dominant (resp. $0$-dominant) if all entries of $v$ are $1$'s (resp. $0$'s) except one entry.

\begin{thm} For $k \geq 3$ and $n \geq 2$, the number of $\kn$ matrices avoiding $\{T, L\}$ is equal to twice the number of rook positions in the $k\times n$ chessboard. In other words,
\begin{equation}
\phi(k,n;T , L) =2\sum_{m=0}^{\min(k,n)}\,\binom{k}{m}\binom{n}{m}m!\,. \label{eq:phi-A45}
\end{equation}
\end{thm}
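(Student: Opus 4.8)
The plan is to describe $\{T,L\}$-avoiding matrices explicitly as two families and count them. The first move is to reformulate avoidance in support language. The forbidden class $[T]\cup[L]$ consists of exactly the four $2\times2$ matrices with two $1$'s in a common line, namely $\sMTab,\sMTcd$ (this is $[T]$) and $\sMTac,\sMTbd$ (this is $[L]$). Writing $A_i\subseteq[n]$ for the set of columns in which row $i$ has a $1$, a submatrix on two rows lies in $[L]$ exactly when one of the two chosen columns is common to both row-supports and the other is common to neither; hence $M$ avoids $L$ if and only if for all $i\neq j$ one has $A_i\cap A_j=\emptyset$ or $A_i\cup A_j=[n]$. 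Dually, writing $P_j\subseteq[k]$ for the rows in which column $j$ has a $1$, $M$ avoids $T$ if and only if for all $j\neq\ell$ one has $P_j\cap P_\ell=\emptyset$ or $P_j\cup P_\ell=[k]$. I would establish these two equivalences first, as everything rests on them.

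Next I set up the two families. Let $\mathcal P$ be the set of partial permutation matrices (at most one $1$ in each row and each column), which are exactly the placements of pairwise non-attacking rooks, and let $\mathcal P'$ be the set of their $(0,1)$-complements. A matrix in $\mathcal P$ has no two $1$'s in any line, so no $2\times2$ submatrix can contain a colinear pair of $1$'s; thus $\mathcal P$ avoids $\{T,L\}$. Since complementing $0\leftrightarrow1$ maps $[T]\cup[L]$ to itself, it preserves avoidance, so $\mathcal P'$ avoids $\{T,L\}$ as well. Counting rook placements by the number $m$ of rooks gives $|\mathcal P|=|\mathcal P'|=\sum_{m=0}^{\min(k,n)}\binom{k}{m}\binom{n}{m}\,m! =: r(k,n)$, which is the number of rook positions. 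A matrix in $\mathcal P\cap\mathcal P'$ has at most one $1$ and at most one $0$ in each row, forcing $n\le2$, and likewise $k\le2$; hence for $k\ge3$ the two families are disjoint and $|\mathcal P\cup\mathcal P'|=2\,r(k,n)$. This already pins down the right-hand side of \eqref{eq:phi-A45}.

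It remains to prove the converse inclusion, that every $\{T,L\}$-avoiding matrix lies in $\mathcal P\cup\mathcal P'$; this is the heart of the argument. Because transposing swaps $T\leftrightarrow L$ and complementing swaps $\mathcal P\leftrightarrow\mathcal P'$, it suffices to prove: \emph{if some row $\rho$ contains two $1$'s, then every row and every column of $M$ has at most one $0$} (so $M$ is co-partial, i.e.\ in $\mathcal P'$). I would run a rigidity/propagation argument in four steps. (a) Picking columns $c_1,c_2$ with $M_{\rho,c_1}=M_{\rho,c_2}=1$, the $T$-condition on $\{c_1,c_2\}$ has nonempty intersection (row $\rho$), so $P_{c_1}\cup P_{c_2}=[k]$; feeding this into the $L$-condition between $\rho$ and any other row rules out the disjoint alternative and forces every other row to contain $[n]\setminus A_\rho$. (b) If $\rho$ itself had two $0$'s, some other row would be $1$ on both of those columns while $\rho$ is $0$ on both, yielding a $\sMTcd\in[T]$; so $\rho$ has at most one $0$. (c) No other row has two $0$'s: the complementary form of (b) would force such a row to have at most one $1$, and with (a) this makes it essentially the indicator of $[n]\setminus A_\rho$, which then produces a $T$ against $\rho$ across the (at least two) columns where $\rho$ is $1$ and it is $0$. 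Hence every row has at most one $0$. (d) Finally, a column with two $0$'s would meet two rows whose unique $0$ (by (c)) sits in that column; these rows are then identical, all $1$'s except that entry, yielding an $\sMTbd\in[L]$. So every column has at most one $0$, and $M\in\mathcal P'$.

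The main obstacle is exactly this converse propagation, and within it the genuinely delicate point is handling the degenerate configurations uniformly: an all-$1$ row (where $[n]\setminus A_\rho=\emptyset$ and step (a) is vacuous) versus a row with a single $0$, together with the small cases $n=2,3$ where the column count in step (c) is tight. I expect to dispatch these by the case split $|[n]\setminus A_\rho|\in\{0,1\}$ coming from step (b): when it equals $1$ the equality $|A_\rho|=n-1\ge2$ forces $n\ge3$ and hence at least two relevant columns, and when it equals $0$ a direct count against the all-$1$ row $\rho$ suffices. It is worth noting that the structural dichotomy $M\in\mathcal P\cup\mathcal P'$ in fact holds for all $k,n\ge1$; the hypothesis $k\ge3$ is used only to guarantee $\mathcal P\cap\mathcal P'=\emptyset$, which is what converts $|\mathcal P\cup\mathcal P'|$ into the clean value $2\,r(k,n)$.
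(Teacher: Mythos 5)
Your proposal is correct and follows essentially the same route as the paper: both establish that a $\{T,L\}$-avoiding matrix must be a partial permutation matrix (a rook placement of $1$'s) or the complement of one --- the paper runs the propagation via $0$-dominant/$1$-dominant mixed columns, you via row and column supports --- and then count rook positions twice. If anything, you are more careful than the paper in verifying that the two families are disjoint once $k \geq 3$, which is exactly what justifies the clean factor of $2$ in \eqref{eq:phi-A45}.
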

\begin{proof}
Suppose $M$ is a $\kn$ matrix avoiding $\{T, L\}$. It is easy to show each of the following steps:
\begin{itemize}
\item[(i)] If $M$ has a mixed column $v$, then $v$ should be either $0$-dominant or $1$-dominant.
\item[(ii)] Assume that $v$ is $0$-dominant. This implies that other mixed columns(if any) in $M$ should be $0$-dominant.
\item[(iii)] Any non-mixed column in $M$ should be a $0$-column.
\item[(iv)] The location of $1$'s in $M$ corresponds to a rook position in the $k\times n$ chessboard.
\end{itemize}
If we assume $v$ is $1$-dominant in (ii) then the locations of $0$'s again corresponds to a rook position. The summand of RHS in \eqref{eq:phi-A45} is the number of rook positions in the $k\times n$ chessboard with $m$ rooks. This completes the proof.
\end{proof}

The generating function $\Phi(x,y;T , L)$ is given by
\begin{align}
\Phi(x,y&;T , L)
\,=\, 2e^{xy+x+y} -\frac{(xy)^2}{2} \notag\\
&-2xy+3 - 2e^x - 2e^y + x(e^y - 2y -1)(e^y -1) + y(e^x -2x -1)(e^x -1). \label{eq:Phi-A45}
\end{align}
Here the crucial part of the equation~\eqref{eq:Phi-A45} can be obtained as follows:
\begin{eqnarray*}
\sum_{k,n \ge 0} \left( \sum_{m \ge 0}\,\binom{k}{m}\binom{n}{m} m!\right) \frac{x^k}{k!}\frac{y^n}{n!}
&=& \sum_{m \ge 0} m! \left( \sum_{k \ge 0}\binom{k}{m} \frac{x^k}{k!} \right)
\left( \sum_{n \ge 0}\binom{n}{m} \frac{y^n}{n!} \right)\\
&=& \sum_{m \ge 0} m! \left(\frac{x^m}{m!} \,e^x \right) \left(\frac{y^m}{m!} \,e^y \right)
~=~ \exp({xy+x+y}).
\end{eqnarray*}
Note that if we use the symbol ``$\stackrel{2}=$" introduced in Section~\ref{sub:dn}, then
$$\Phi(x,y;T,L)\stackrel{2}= 2e^{xy+x+y} -\frac{(xy)^2}{2}.$$

For the $n\by n$ matrices we have 
\begin{gather*}
\phi(0,0;T , L)=1, \quad \phi(1,1;T , L)=2, \quad \phi(2,2;T , L)=12, \text{~~and}\\
\phi(n,n;T , L)=2\,\sum_{m=0}^{n}\,\binom{n}{m}^2 m!\,. \quad (n\geq 3)
\end{gather*}
Thus the generating function $\Phi(z;T , L)$ is given by
\begin{equation}\label{eq:Phiz-A45}
\Phi(z;T , L)
= {2\,e^{\frac{z}{1-z}} \over 1-z}\,-1-2z-z^2.
\end{equation}
Note that we use the equation
$$
\sum_{n \ge 0} \left( \sum_{m=0}^{n}\,\binom{n}{m}^2 m!\right) \frac{z^n}{n!} = \frac{e^{\frac{z}{1-z}}}{1-z},
$$
which appears in \cite[pp. 597--598]{FS}.

\subsection{$\{J,O\}$-avoiding matrices}\label{subJO}
Recall the equivalence relation $\sim'$ defined in subsection~3.2. With this relation, $\{J, O\}$ becomes a single equivalent class.
Due to the symmetry of $\{J, O\}$  it is obvious that
$$\phi(k,n;J , O)=\phi(n,k;J , O).$$
The $k$-color bipartite Ramsey number $br(G;k)$ of a bipartite graph $G$ is the minimum integer $n$ such that
in any $k$-coloring of the edges of $K_{n,n}$ there is a monochromatic subgraph isomorphic to $G$.
Beineke and Schwenk \cite{BS} had shown that $br(K_{2,2};2)=5$.
From this we can see that
$$\phi(k,n;J , O)=0 \quad (k,n \geq 5)\,.$$

For $k=1$ and $2$, we have
\begin{equation*}
\phi(1,n;J ,O)=2^n, \qquad 
\phi(2,n;J , O)=(n^2+3n+4)2^{n-2}.
\end{equation*}
Note that the sequence $(n^2+3n+4)\,2^{n-2}$ appears in \cite[A007466]{OEIS} and its exponential generating function is
$(1+x)^2 e^{2x}$.

For $k \geq 3$, we have
\begin{eqnarray*}
\phi(3,n;J , O)=\phi(4,n;J , O)=0 &\quad&\mbox{for $n\geq 7$}, \\
\phi(5,n;J , O)=\phi(6,n;J , O)=0 &\quad&\mbox{for $n\geq 5$}, \\
\phi(k,n;J , O)=0 &\quad&\mbox{for $k\geq 7$ and $n\geq 3$}.
\end{eqnarray*}

For exceptional cases, due to the symmetry of $\{J , O\}$, it is enough to consider the followings:
\begin{gather*}
\phi(3,3;J, O)=156,\quad \phi(3,4;J, O)=408,\quad \phi(4,4;J, O)=840, \\
\phi(3,5;J, O)=\phi(3,6;J, O)=\phi(4,5,J, O)=\phi(4,6;J, O)=720\,.
\end{gather*}
The sequence $\phi(k,n;J, O)$ is listed in Table~\ref{tab:A67}. Note that Kitaev et al. have already calculated $\phi(k,n;J, O)$ in \cite[Proposition 5]{KMV}, but the numbers of $\phi(3,3;J, O)$ and $\phi(4,4;J, O)$ are different with ours.
\begin{table}[t]\centering
\begin{tabular}{|c||c|c|c|c|c|c|c|c|}
  \hline
  $k\diagdown n$ & 1 & 2 & 3 & 4 & 5 & 6 & 7 & $\cdots$ \\
  \hline\hline
  1 & 2 & 4 & 8 & 16 & 32 & 64 & 128 & $\cdots$ \\
  \hline
  2 & 4 & 14 & 44 & 128 & 352 & 928 & 2368 & $\cdots$ \\
  \hline
  3 & 8 & 44 & 156 & 408 & 720 & 720 & 0 & 0 \\
  \hline
  4 & 16 & 128 & 408 & 840 & 720 & 720 & 0 & 0 \\
  \hline
  5 & 32 & 352 & 720 & 720 & 0 & 0 & 0 & 0 \\
  \hline
  6 & 64 & 928 & 720 & 720 & 0 & 0 & 0 & 0 \\
  \hline
  7 & 128 & 2368 & 0 & 0 & 0 & 0 & 0 & 0 \\
  \hline
  \vdots & \vdots & \vdots & 0 & 0 & 0 & 0 & 0 & 0 \\
  \hline
\end{tabular}
\caption{The sequence $\phi(k,n;J , O)$}\label{tab:A67}
\end{table}

The generating function~$\Phi(x,y;J , O)$ is given by
\begin{align}
\Phi(x,y&;J , O)
\,=\, 1+ x\,e^{2y}+y\,e^{2x}+x^2(1+y)^2e^{2y}+y^2(1+x)^2 e^{2x}\notag\\
&-\left(x+y+\frac{x^2}{2!}+\frac{y^2}{2!}+2xy+2x^2 y + 2xy^2 +14\frac{x^2y^2}{2!2!}\right)
+156\frac{x^3y^3}{3!3!}+840\frac{x^4y^4}{4!4!} \notag\\
&+720\left(\frac{x^3y^5}{3!5!}+\frac{x^5y^3}{5!3!}+\frac{x^3y^6}{3!6!}+\frac{x^6y^3}{6!3!}
+\frac{x^4y^5}{4!5!}+\frac{x^5y^4}{5!4!}+\frac{x^4y^6}{4!6!}+\frac{x^6y^4}{6!4!} \right)\,. \label{eq:Phi-A67}
\end{align}
In particular, the generating function~$\Phi(z;J , O)$ is given by
\begin{equation}
\Phi(z;J , O) = 1+2z + 7 z^2 + 26 z^3 + 35 z^4\,.  \label{eq:Phiz-A67}
\end{equation}

\section{Concluding remarks}
Table~\ref{tab:A-all} summarizes our results (except the $I$-avoiding case).
Due to the amount of difficulty, we are not able to enumerate the number $\phi(k,n;J)$, hence $\phi(k,n;O)$.
Note that $\phi(k,n;J)$ is equal to the following:
\begin{itemize}
\item[(a)] The number of labeled $(k,n)$-bipartite graphs with girth of at least $6$, i.e., the number of $C_4$-free labeled
$(k,n)$-bipartite graphs, where $C_4$ is a cycle of length $4$.
\item[(b)] The cardinality of the set 
$
\{(B_1, B_2,\ldots, B_k)\,:\,B_i \subseteq [n]~\forall\, i,~~|B_i \cap B_j| \leq 1~\forall\, i\ne j \}
$.
\end{itemize}
For the other subsets $\alpha$ of $\mS$ which is not listed in Table~\ref{tab:A-all}, we have calculated $\phi(k,n;\alpha)$ in \cite{JS2}. Note that if the size of $\alpha$ increases then enumeration of $M(\alpha)$ becomes easier.

For further research, we suggest the following problems.
\begin{enumerate}
\item In addition to $(0,1)$-matrices, one can consider $(0,1,\dots,r)$-matrices with $r \ge 2$.
\item Consideration of the results of adding the line sum condition to each individual case given in the first column of Table~\ref{tab:A-all}.
\end{enumerate}
\begin{table}[t]
\centering
\begin{tabular}{|c||c|c|c|}
  \hline
   $\alpha$ & $\phi(k,n;\alpha)$  & $\Phi(x,y;\alpha) $ & $\Phi(z;\alpha)$  \\
  \hline\hline
  $I$ &  \eqref{eq:phi-A1}&  \eqref{eq:Phi-A1}&  \eqref{eq:Phiz-A1} \\
  \hline
  $\Gamma \,(\text{or}~C)$ &  \eqref{eq:phi-A2}&  \eqref{eq:Phi-A2}&  complicated \\
  \hline
  $\{\Gamma , C\}$ &  \eqref{eq:phi-A23}&  \eqref{eq:Phi-A23}&  \eqref{eq:Phiz-A23}  \\
  \hline
  $T \,(\text{or}~L)$ &  \eqref{eq:phi-A4}&  \eqref{eq:Phi-A4}&  \eqref{eq:Phiz-A4}  \\
  \hline
  $\{T , L\}$ &  \eqref{eq:phi-A45}&  \eqref{eq:Phi-A45}&  \eqref{eq:Phiz-A45}  \\
  \hline
  $J \,(\text{or}~O)$ & unknown & unknown & unknown  \\
  \hline
  $\{J, O\}$ & Table~\ref{tab:A67} & \eqref{eq:Phi-A67} & \eqref{eq:Phiz-A67} \\
  \hline
\end{tabular}
\caption{Formulas and generating functions avoiding $\alpha$.}\label{tab:A-all}
\end{table}

\section*{Acknowledgement}
The Authors are thankful to Sergey Kitaev in the University of Strathclyde for his helpful comments and suggestions.


\end{document}